\definecolor{darkblue}{rgb}{0.13,0.13,0.39}%{0.03,0.03,0.265}
\newtheorem{thm}{Theorem}
\newtheorem{lem}{Lemma}[section]
\newtheorem{prop}[lem]{Proposition}
\theoremstyle{definition}
\newtheorem{rem}[lem]{Remark}
\newtheorem*{rem*}{Remark}
\newcounter{assum}
\newcommand{\I}{{\rm i}}
\newcommand{\Bt}{{\mathcal{A}}_{2\to 1}}
\newcommand{\pp}{\mathbb{P}}
\newcommand{\ee}{\mathbb{E}}
\newcommand{\rr}{\mathbb{R}}
\newcommand{\zz}{\mathbb{Z}}
\newcommand{\aip}{\mathcal{A}_2}
\newcommand{\aipo}{\mathcal{A}_1}
\newcommand{\ch}{\mathcal{H}}
\newcommand{\p}{\partial}
\newcommand{\uno}[1]{\mathbf{1}_{#1}}
\newcommand{\vs}{\vspace{6pt}}
\newcommand{\wt}{\widetilde}
\newcommand{\K}{K_{\Ai}}
\newcommand{\m}{\overline{m}}
\DeclareMathOperator{\Ai}{Ai}
\DeclareMathOperator{\tr}{tr}
\newcommand{\gref}[1]{\ref*{g-#1} of \cite{cqr}}
\newcommand{\geqref}[1]{(\ref*{g-#1}) in \cite{cqr}}
\numberwithin{equation}{section}
\let\oldmarginpar\marginpar
\renewcommand\marginpar[1]{\-\oldmarginpar[\raggedleft\footnotesize #1]%
{\raggedright{\small\textsf{#1}}}}
\begin{document}

\title[Supremum of the Airy$_2$ process minus a parabola on a half line]{Supremum of the Airy$_2$ process minus a parabola on a half line}

\author{Jeremy Quastel}
\address[J.~Quastel]{
  Department of Mathematics\\
  University of Toronto\\
  40 St. George Street\\
  Toronto, Ontario\\
  Canada M5S 2E4} \email{quastel@math.toronto.edu}
\author{Daniel Remenik}
\address[D.~Remenik]{
  Department of Mathematics\\
  University of Toronto\\
  40 St. George Street\\
  Toronto, Ontario\\
  Canada M5S 2E4 \newline \indent\textup{and}\indent
  Departamento de Ingenier\'ia Matem\'atica\\
  Universidad de Chile\\
  Av. Blanco Encala\-da 2120\\
  Santiago\\
  Chile} \email{dremenik@math.toronto.edu}

\maketitle

\begin{abstract}
Let $\aip(t)$ be the Airy$_2$ process.  We show that the random variable
\[\sup_{t\leq\alpha}\{\aip(t)-t^2\}+\min\{0,\alpha\}^2\]
has the same distribution as the one-point marginal of the Airy$_{2\to1}$ process at time
$\alpha$.  These marginals form a family of distributions crossing over from the GUE
Tracy-Widom distribution $F_{\rm GUE}(x)$ for the Gaussian Unitary Ensemble of random
matrices, to a rescaled version of the GOE Tracy-Widom distribution $F_{\rm GOE}( 4^{1/3}
x)$ for the Gaussian Orthogonal Ensemble. Furthermore, we show that for every $\alpha$ the
distribution has the same right tail decay $e^{-\frac43 x^{3/2} }$.
\end{abstract}

\section{Introduction}
The Airy processes are a collection of stochastic processes which are expected to govern
the spatial fluctuations of random growth models in the one dimensional KPZ universality
class for wide classes of initial data.  They are defined through their finite dimensional
distributions, which are given by Fredholm determinants.  The three basic processes are
Airy$_2$ \cite{prahoferSpohn}, corresponding to curved, or droplet initial data; Airy$_1$
\cite{sasamoto,borFerPrahSasam,bfp}, corresponding to flat initial data; and Airy$_{\rm stat}$
\cite{baikFerrariPeche}, corresponding to equilibrium initial data.

The KPZ class is identified at the roughest level by the unusual $t^{1/3}$ scale of
fluctuations. It is expected to contain a large class of random growth processes,
including the Kardar-Parisi-Zhang equation itself, as well as randomly stirred one
dimensional fluids, polymer chains directed in one dimension and fluctuating transversally
in the other due to a random potential (with applications to domain interfaces in
disordered crystals), driven lattice gas models, reaction-diffusion models in
two-dimensional random media (including biological models such as bacterial colonies),
randomly forced Hamilton-Jacobi equations, etc.  A combination of non-rigorous methods
(renormalization, mode-coupling, replicas) and mathematical breakthroughs on a few special
models has led to very precise predictions of universal scaling exponents and exact
statistical distributions describing the long time properties. These predictions have been
repeatedly confirmed through Monte-Carlo simulation as well as experiments; in particular,
recent spectacular experiments on turbulent liquid crystals by Takeuchi and Sano
\cite{takeuchiSano1,takeuchiSano2} have been able to even confirm some of the predicted
fluctuation statistics.

The conjectural picture that has developed is that the universality class is divided into
subuniversality classes which depend on the initial data class, but not on other details
of the particular models.  Because of their self-similarity properties, the three basic
initial data are, at the level of continuum partition functions (taking logarithms gives
free energies or height functions): Dirac $\delta_0$, corresponding to curved, or droplet
type initial data; $0$, corresponding to growth off a flat substrate; and $e^{B(x)}$ where
$B(x)$ is a two sided Brownian motion, corresponding to growth in equilibrium.  Of course,
in discrete models of various types one is dealing with discrete approximations of such
initial data.  There are also three additional non-homogeneous subuniversality classes
corresponding to starting with one of the basic three on one side of the origin, and
another on the other side.  The spatial fluctuations in these six basic classes of initial
data are supposed to be given asymptotically by the six known Airy processes: the three
basic Airy processes, Airy$_2$, Airy$_1$ and Airy$_{\rm stat}$, and the crossover Airy
processes Airy$_{2\to 1}$ \cite{bfs}, Airy$_{2\to {\rm BM}}$
\cite{imamSasam1,corwinFerrariPeche} and Airy$_{1\to {\rm BM}}$ \cite{bfsTwoSpeed}.

However, since all initial data are superpositions of Dirac masses, there is a sense is
which the Airy$_2$ process is the most basic.  Although the various microscopic models are
not linear in the initial data, this is the case for the stochastic heat equation, whose
logarithm is the solution of the KPZ equation.  And for other models, the linearity should
hold asymptotically.  In the limit, the logarithm of the superpositions of exponentials of
Airy$_2$ processes becomes a variational problem.

The conclusion is a conjecture that the one-point marginals of the other Airy processes
should be obtained through certain variational problems involving the Airy$_2$ process.
The first example of this was the celebrated result of \citet{johansson} (see also
\cite{cqr}) that the supremum of the Airy$_2$ process minus a parabola has the same
distribution as a rescaled version of the one-dimensional marginal of the Airy$_1$
process, i.e., the GOE Tracy-Widom distribution:
\begin{equation}
\pp\!\left(\sup_{x\in\rr}\big\{\aip(x)-x^2\big\}\leq m\right)=\pp\big(\aipo(0)\leq2^{-1/3}m\big)=F_{\rm GOE}(4^{1/3}m).\label{eq:GOE}
\end{equation}
The general conjecture is based on heuristics which we describe next in the context of the
stochastic heat equation.

\subsection{Heuristics}
\label{sec:heuristics}

We will explain the heuristics first for the case of the Airy$_{2\to1}$ process. Let
$z(t,x)$ denote the solution of the one-dimensional stochastic heat equation
 \begin{equation}
 \partial_t z = \tfrac12\partial_x^2 z - z \xi
 \end{equation}
 where $\xi(t,x)$ is space-time white noise.  The solution at position $x$ and time $t$
 starting from a Dirac mass at $y$ at time $0$ can be written as 
\begin{equation}\label{eq:zed}
  z(0,y; t,x) = \tfrac{1}{\sqrt{2\pi t}} e^{ -\tfrac{(x-y)^2}{2t} -\tfrac{t}{24}+ 2^{-1/3}t^{1/3}A_t(2^{-1/3}t^{-2/3}(x-y))},
\end{equation}
where $A_t$ is conjectured to converge to the Airy${}_2$ process, $A_t( x) \to \aip(x)$
(see Conjecture 1.5 in \cite{acq} for a precise statement, and \cite{prolhacSpohn} for a non-rigorous derivation). Starting from the step initial data $z(0,x) =\uno{x>0}$
the prediction is
\begin{equation}\label{eq:1to2approx}
  -\log z(t,x) \approx \tfrac1{2t}x^2\uno{x<0}+\tfrac{1}{24}t+\log(\sqrt{2\pi t}) - 2^{-1/3}t^{1/3} \Bt(2^{-1/3}t^{-2/3} x).
\end{equation}
On the other hand, by linearity we have for each fixed $x$, in distribution,
\begin{equation}\label{eq:zed2}
  z( t,x) =\int_0^\infty dy\,z(0,y; t,x)= \int_0^\infty dy\,\tfrac{1}{\sqrt{2\pi t}}
  e^{ -\tfrac{(x-y)^2}{2t} -\tfrac{t}{24}+ 2^{-1/3}t^{1/3}A_t(2^{-1/3}t^{-2/3} (x-y)) }.
\end{equation}
Note however that as written the equality can only hold in distribution for each $t$ and
$x$.  If one wants a stronger statement, for fixed $t$ but multiple $x$, one has to
replace $A_t(2^{-1/3}t^{-2/3} (x-y))$ in \eqref{eq:zed} by a two parameter process
$\tilde{A}_t(2^{-1/3}t^{-2/3} x, 2^{-1/3}t^{-2/3}y)$, keeping track of the statistical
dependence on the initial $y$.  For fixed $y$, it is distributionally, as a process in
$x$, equal to $A_t(2^{-1/3}t^{-2/3} (x-y))$. And, by symmetry, the same is true for fixed
$x$, as a process in $y$.  However, they are not equal in distribution in the sense of two
parameter processes in both $x$ and $y$. The limit of $\tilde{A}_t(x,y)$ is unknown at
this time, so one is stuck at the level of one-dimensional marginals.

Calling $\tilde{x}=2^{-1/3}t^{-2/3} x$ and
$\tilde{y}=2^{-1/3}t^{-2/3} y$ we can rewrite the exponent in \eqref{eq:zed2} as
\[2^{-1/3}t^{1/3}\big[A_t (\tilde{x}-\tilde{y}) -(\tilde{x}-\tilde{y})^2\big]-\tfrac1{24}t\] so that for large $t$ the fluctuation field
$2^{1/3}t^{-1/3} \big[\log z(t,x)+\frac1{24}t+\log(\sqrt{2\pi t})\big]$ is well approximated by
\begin{equation}
\sup_{\tilde{y}\ge 0} \big( \aip (\tilde{x}-\tilde{y})-(\tilde{x}-\tilde{y})^2\big).
\end{equation}
Comparing with \eqref{eq:1to2approx} we deduce that the processes $\sup_{y\ge 0} \big(
\aip(x-y)-(x-y)^2\big)$ and $\Bt(x)-x^2\uno{x<0}$ should have the same one-dimensional
distribution or, equivalently, that
\begin{equation}
  \Bt(x)-x^2\uno{x<0}\quad{\buildrel {\rm (d)} \over =}\quad\sup_{y\leq x} \big\{\aip(y)-y^2\big\}
\label{eq:conj}
\end{equation}
for each fixed $x\in\rr$.

The same argument works for the other two crossover cases. If we let
$z(0,x)=e^{B(x)}\uno{x\geq0}$, where $B(x)$ is a standard Brownian motion, then
\eqref{eq:1to2approx} and \eqref{eq:zed2} are replaced respectively by
\begin{gather}
  -\log z(t,x) \approx \tfrac1{2t}x^2\uno{x<0}+\tfrac{1}{24}t+\log(\sqrt{2\pi t}) -
  2^{-1/3}t^{1/3}\mathcal{A}_{2\to {\rm BM}}(2^{-1/3}t^{-2/3} x)\\
  \shortintertext{and}
  z( t,x)=\int_0^\infty dy\,z(0,y; t,x)= \int_0^\infty dy\,\tfrac{1}{\sqrt{2\pi t}}
  e^{ -\tfrac{(x-y)^2}{2t} -\tfrac{t}{24}+B(y)+2^{-1/3}t^{1/3}A_t(2^{-1/3}t^{-2/3} (x-y)) },
\end{gather}
and now the same scaling argument allows to conjecture that
\[\mathcal{A}_{2\to {\rm BM}}(x)-x^2\uno{x<0}\quad{\buildrel {\rm (d)} \over =}\quad\sup_{y\le x} \big( \aip
(y)+\widetilde{B}(x-y)-y^2\big)\]
for each fixed $x\in\rr$, where now $\widetilde B(y)$ is a
Brownian motion with diffusion coefficient 2. An analogous argument with
$z(0,x)=\uno{x\leq 0}+e^{B(x)}\uno{x\geq0}$ translates into conjecturing that
\[\mathcal{A}_{1\to {\rm BM}}(x)\quad{\buildrel {\rm (d)} \over =}\quad\sup_{y\in\rr} \big( \aip
(y)+\widetilde{B}(x-y)\uno{y\leq x}-y^2\big)\]
for each fixed $x\in\rr$.

In this article we prove the conjecture \eqref{eq:conj} for $\Bt$, which connects the
three Airy processes with non-random initial data. To state the result precisely, we now
recall the exact definitions of the Airy$_2$, Airy$_1$, and Airy$_{2\to 1}$ processes,
together with some additional background.

\subsection{Statement of main results.}
The Airy${}_2$ process $\aip$, introduced by \citet{prahoferSpohn}, is a stationary
process on the real line whose one dimensional marginals are given by the Tracy-Widom
largest eigenvalue distribution for the Gaussian Unitary Ensemble (GUE) from random matrix
theory \cite{tracyWidom}. It is expected to govern the asymptotic spatial fluctuations in
a wide variety of random growth models on a one dimensional substrate with curved initial
conditions, and the point-to-point free energies of directed random polymers in $1+1$
dimensions (the KPZ universality class).  It also arises as the scaling limit of the top
eigenvalue in Dyson's Brownian motion \cite{dyson} for GUE (see \cite{andGuioZeit} for
more details). It is defined through its finite-dimensional distributions, which are given
by a determinantal formula: given $\xi_1,\dots,\xi_m\in\mathbb{R}$ and $t_1<\dots<t_m$ in
$\mathbb{R}$,
 \begin{equation}\label{eq:detform}
   \pp\!\left(\aip(t_1)\le\xi_1,\dots,\aip(t_m)\le\xi_m\right) = 
   \det\!\big(I-{\rm f}^{1/2}K_2^{\mathrm{ext}}{\rm f}^{1/2}\big)_{L^2(\{t_1,\dots,t_m\}\times\mathbb{R})},
 \end{equation}
 where $\pp$ denotes probability, we have counting measure on $\{t_1,\dots,t_m\}$ and
 Lebesgue measure on $\mathbb{R}$, ${\rm f}$ is defined on
 $\{t_1,\dots,t_m\}\times\mathbb{R}$ by
 \begin{equation}
   {\rm f}(t_j,x)=\uno{x\in(\xi_j,\infty)},\label{eq:deff}
 \end{equation}
 and the {\it extended Airy kernel}, \cite{FNH,macedo,prahoferSpohn} is defined by
 \[K_2^\mathrm{ext}(t,\xi;t',\xi')=
 \begin{cases}
   \int_0^\infty d\lambda\,e^{-\lambda(t-t')}\Ai(\xi+\lambda)\Ai(\xi'+\lambda), &\text{if $t\ge t'$}\\
   -\int_{-\infty}^0 d\lambda\,e^{-\lambda(t-t')}\Ai(\xi+\lambda)\Ai(\xi'+\lambda),
   &\text{if $t<t'$,}
 \end{cases}\] where $\Ai(\cdot)$ is the Airy function.

 The Airy$_1$ process, introduced by \citet{sasamoto}, is another stationary process,
 whose one-point distribution is now given by the Tracy-Widom largest eigenvalue
 distribution for the Gaussian Orthogonal Ensemble (GOE) from random matrix theory
 \cite{tracyWidom2}.  It is defined through its finite-dimensional distributions,
\begin{equation}\label{eq:detAiry1}
\pp\!\left(\aipo(t_1)\le \xi_1,\dots,\aipo(t_n)\le \xi_n\right) = 
\det\!\big(I-\mathrm{f}^{1/2}K^{\mathrm{ext}}_1\mathrm{f}^{1/2}\big)_{L^2(\{t_1,\dots,t_n\}\times\mathbb{R})},
\end{equation}
with ${\rm f}$ as in \eqref{eq:deff} and
\begin{multline}\label{eq:fExtAiry1}
K^{\rm ext}_1(t,\xi;t',\xi')=-\frac{1}{\sqrt{4\pi
    (t'-t)}}\exp\!\left(-\frac{(\xi'-\xi)^2}{4 (t'-t)}\right)\uno{t'>t}\\
+\Ai(\xi+\xi'+(t'-t)^2) \exp\!\left((t'-t)(\xi+\xi')+\frac23(t'-t)^3\right).
\end{multline}
It  is expected to govern the asymptotic spatial fluctuations in  random growth models  with flat initial
conditions, and the point-to-line free energies of directed random polymers.

The Airy$_{2\to 1}$ process $\Bt$, introduced by \citet{bfs}, is given by
\begin{equation}\label{eqTransAiryProcess}
\pp\!\left(\Bt(t_1)\le\xi_1,\dots,\Bt(t_m)\le\xi_m\right)=
   \det\!\big(I-{\rm f}^{1/2}K_\infty{\rm f}^{1/2}\big)_{L^2(\{t_1,\dots,t_m\}\times\mathbb{R})},
\end{equation}
with ${\rm f}$ as in \eqref{eq:deff} and
\begin{multline}\label{eqKCompleteInfinity}
K_\infty(s,x;t,y)=-\frac{1}{\sqrt{4\pi(t-s)}}\exp\!\left(-\frac{(\tilde y-\tilde x)^2}{4(t-s)}\right)\uno{t>s}\\
+\frac{1}{(2\pi \I)^2} \int_{\gamma_+}dw \int_{\gamma_-}dz\,
\frac{e^{w^3/3+t w^2-\tilde y w}}{e^{z^3/3+s z^2-\tilde x z}} \frac{2w}{(z-w)(z+w)},
\end{multline}
where $\tilde x=x-s^2\uno{s\leq0}$, $\tilde y=y-t^2\uno{t\leq 0}$ and
the paths $\gamma_+,\gamma_-$ satisfy $-\gamma_+\subseteq\gamma_-$ with
\mbox{$\gamma_+:e^{\I\phi_+}\infty\to e^{-\I\phi_+}\infty$},
\mbox{$\gamma_-:e^{-\I\phi_-}\infty\to e^{\I\phi_-}\infty$} for some $\phi_+\in
(\pi/3,\pi/2)$, $\phi_-\in (\pi/2,\pi-\phi_+)$. The Airy$_{2\to 1}$ process crosses over
between the Airy$_2$ and the Airy$_1$ processes in the sense that $\Bt(t+\tau)$ converges
to $2^{1/3}\aipo(2^{-2/3}\tau)$ as $t\to\infty$ and $\aip(\tau)$ when $t\to-\infty$. It is
expected to govern the asymptotic spatial fluctuations in random growth models when the
initial conditions are {\it half flat}. In particular, it is shown in \cite{bfs} that it
governs the asymptotic fluctuations for the totally asymmetric (to the left) simple
exclusion process starting with particles only at the even positive integers.
 
Define the \emph{crossover distributions} $G^{2\to1}_\alpha$, for $\alpha\in\rr$, as
follows:
\begin{equation}
G^{2\to1}_\alpha(m)=\det\!\big(I-P_mK_\alpha P_m\big)\label{eq:Ga}
\end{equation}
where $P_m$ denotes the projection onto the interval $[m,\infty)$,
$K_\alpha=K^1_\alpha+K^2_\alpha$ and the kernels $K^1_\alpha$ and $K^2_\lambda$ are given
by
\[K_\alpha^1(x,y)=\int_0^{\infty} d\lambda\,e^{2\alpha\lambda}\Ai(x-\lambda+\max\{0,\alpha\}^2)\Ai(y+\lambda+\max\{0,\alpha\}^2)\]
and
\[K_\alpha^2(x,y)= \int_0^\infty
d\lambda\Ai(x+\lambda+\max\{0,\alpha\}^2)\Ai(y+\lambda+\max\{0,\alpha\}^2).\] Here, and in
everything that follows, the determinant means the Fredholm determinant in the Hilbert
space $L^2(\rr)$. As noted in Appendix A of \cite{bfs}, the kernel $K_\infty$ can be
expressed in terms of Airy functions\footnote{This corresponds to a minor correction of
  the formula appearing in \cite{bfs}, where the exponential prefactor appears in front of
  $L_0$ instead of $L_1+L_2$.}:
\begin{equation}
  K_\infty(s,t;x,y)=L_0(s,x;t,y)+e^{2t^3/3-2s^3/3+t\tilde y-s\tilde x}[L_1+L_2](s,x;t,y),\label{eq:Kinfty2}
\end{equation}
where
\begin{align}
      L_0(s,x;t,y)&=-e^{(s-t)\Delta}(\tilde x,\tilde
      y)=-\frac{1}{\sqrt{4\pi(t-s)}}e^{-(\tilde x-\tilde y)^2/4(t-s)},\\
      L_1(s,x;t,y)&=\int_0^\infty d\lambda\,e^{\lambda(s+t)}\Ai(\hat x-\lambda)\Ai(\hat
      y+\lambda),\\
      L_2(s,x,t,y)&=\int_0^\infty d\lambda\,e^{\lambda(t-s)}\Ai(\hat x+\lambda)\Ai(\hat
    y+\lambda)
\end{align}
with $\tilde x=x-s^2\uno{s\leq0}$, $\tilde y=y-t^2\uno{t\leq0}$, $\hat
x=x+s^2\uno{s\geq0}$ and $\hat y=y+t^2\uno{t\geq0}$.  Using this for $s=t=\alpha$ it is
straightforward to check that $K_\infty(t,\cdot;t,\cdot)$ is just a similarity transform
of the kernel $K_\alpha$, and therefore
\[G^{2\to1}_\alpha(m)=\pp\!\left(\Bt(\alpha)\leq m\right).\]

\begin{rem}
  The operator $P_mK_\alpha P_m$ appearing inside the determinant defining
  $G^{2\to1}_\alpha$ in \eqref{eq:Ga} is trace class (this follows from \eqref{eq:K1anorm}
  together with a similar bound for $K^2_\alpha$). This should be compared with the fact
  that the extended kernel given in \cite{bfs} for the higher dimensional joint
  distributions of $\Bt$ is not trace class (but, as shown in Appendix B of \cite{bfs},
  there is a conjugate kernel which is).
\end{rem}

The following result confirms the conjecture \eqref{eq:conj}:

\begin{thm}\label{thm:1to2}
  Fix $\alpha\in\rr$. For every $m\in\rr$,
  \[\pp\!\left(\sup_{t\leq\alpha}\big(\aip(t)-t^2\big)\leq m-\min\{0,\alpha\}^2\right)=G^{2\to1}_\alpha(m).\]
\end{thm}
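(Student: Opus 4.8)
The plan is to follow the now-standard strategy for establishing identities of this type (used for the full-line case \eqref{eq:GOE} in \cite{cqr}): express the left-hand side as a Fredholm determinant via a continuum statistics formula for the Airy$_2$ process, and then manipulate that determinant until it matches $G^{2\to1}_\alpha(m)$. Concretely, for a fixed function $g$, the probability $\pp(\aip(t)\leq g(t)\text{ for all }t\leq\alpha)$ can be written as a Fredholm determinant of an operator of the form $I-\Theta$ acting on $L^2(\rr)$, where $\Theta$ is built from the Airy kernel $\K$ conjugated with the transition kernel of the Airy process (a "hit" operator encoding the probability that a certain auxiliary diffusion, started from the Airy point process, stays below $g$). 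The relevant $g$ here is $g(t)=m-\min\{0,\alpha\}^2+t^2$, i.e.\ a parabola shifted by the constant $m-\min\{0,\alpha\}^2$; the event $\sup_{t\leq\alpha}(\aip(t)-t^2)\leq m-\min\{0,\alpha\}^2$ is exactly $\{\aip(t)\leq g(t)\ \forall t\leq\alpha\}$.

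First I would set up the continuum-statistics formula on the half-line $(-\infty,\alpha]$. Because the parabola $t^2$ grows at $-\infty$ while $\aip(t)$ only fluctuates like $|t|^{o(1)}$, the supremum is a.s.\ finite and is attained, and the "boundary value at $-\infty$" contributes nothing; this needs a short tightness/localization argument but is essentially the same as the control used in \cite{cqr}. The output is a Fredholm determinant $\det(I-\K + \K\,\Theta_\alpha^{g})$ on $L^2((-\infty,\alpha])$ or, after the usual reduction, a determinant on $L^2(\rr)$ of a kernel that, once the parabola is plugged in, can be evaluated in closed form using the classical identity for the heat kernel acting on $\Ai$ (the "shift" property $e^{u\p^2}\Ai(x) = e^{u^3/3 + ux}\Ai(x+u^2)$ and its relatives). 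The constant shift $m-\min\{0,\alpha\}^2$ and the endpoint $\alpha$ are precisely what produce the $\max\{0,\alpha\}^2$ shifts inside the arguments of $\Ai$ in the definition of $K_\alpha^1,K_\alpha^2$ and the factor $e^{2\alpha\lambda}$ in $K_\alpha^1$; matching these is a bookkeeping computation with Airy-function integral identities.

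The main obstacle, and the step where the real work lies, is the passage from the "path-integral / hitting-probability" representation of the half-line supremum to an explicit Airy-kernel formula — i.e.\ proving the analogue of the key technical results of \cite{cqr} but with (i) the domain a half-line $(-\infty,\alpha]$ rather than all of $\rr$, and (ii) a free right endpoint at $t=\alpha$ that is \emph{not} sent to $+\infty$. On the half-line one loses the symmetrization that trivializes the full-line computation, so the boundary term at $t=\alpha$ survives and must be handled explicitly; I expect this is exactly where the two pieces $K^1_\alpha$ (carrying the $e^{2\alpha\lambda}$, reflecting a "reflection at $\alpha$" type contribution) and $K^2_\alpha$ separate out, and where the distinction between $\alpha\le 0$ and $\alpha\ge 0$ (hence the $\min$ and $\max$ with $0$) enters. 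I would organize this as: (a) derive the finite-dimensional determinantal formula for $\pp(\aip(t_i)\le g(t_i))$ with $t_i$ in a mesh of $(-\infty,\alpha]$ from \eqref{eq:detform}; (b) take the continuum limit to get a Fredholm determinant involving a Brownian-motion hitting kernel; (c) specialize $g$ to the shifted parabola and use the Airy heat-kernel identities to collapse the hitting kernel into $L_1+L_2$-type integrals; (d) recognize the result as a similarity transform of $P_m K_\alpha P_m$, using the already-stated fact that $K_\infty(\alpha,\cdot;\alpha,\cdot)$ is a similarity transform of $K_\alpha$, thereby concluding $= G^{2\to1}_\alpha(m)$. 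Finally, the right-tail claim $e^{-\frac43 x^{3/2}}$ would follow from the explicit kernel $K_\alpha$ by the usual trace-norm estimate on $P_m K_\alpha P_m$ for large $m$, uniformly in $\alpha$, using Gaussian/Airy decay bounds such as the one alluded to in the Remark \eqref{eq:K1anorm}.
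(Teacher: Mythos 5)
Your high-level plan is the paper's strategy: start from the continuum statistics formula \eqref{eq:firstprob} of \cite{cqr} on a finite interval $[-L,\alpha]$ with $g(t)=t^2+\m$, take $L\to\infty$, and massage the resulting Fredholm determinant until it matches the definition \eqref{eq:Ga} of $G^{2\to1}_\alpha$. But the proposal misses the two ideas that actually carry the proof, and your step (d) as stated would fail.

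First, you don't need to rederive a ``half-line'' continuum statistics formula: \eqref{eq:firstprob} already holds for an arbitrary compact interval $[\ell,r]$, so steps (a) and (b) of your outline are not new work. The novelty relative to the full-line computation in \cite{cqr} is only item (ii) of your list: the right endpoint $r=\alpha$ stays finite. For $g(t)=t^2+\m$ the operator $\Theta^g_{[-L,\alpha]}$ has an explicit kernel \eqref{eq:thetag}, obtained by the reflection principle for the Brownian bridge that underlies the boundary-value problem \eqref{eq:bdval}; it involves a concrete reflection-type operator $R_{\alpha,L}$ given by \eqref{eq:R}. Because $\alpha$ is finite, $R_{\alpha,L}$ does \emph{not} disappear when $L\to\infty$, and the heart of the proof is computing the limit of $e^{(\alpha+L)H}\K R_{\alpha,L}$. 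The paper does this by writing $R_{\alpha,L}$ in operator form as $e^{(\alpha^3+L^3)/3}e^{-L\xi}e^{(\alpha+L)\Delta}\varrho_{\m+(L^2+\alpha^2)/2}e^{-\alpha\xi}$ and using the Baker--Campbell--Hausdorff formula to collapse it to $e^{(\alpha+L)H}\K R_{\alpha,L}=\K e^{\alpha\xi}\varrho_{\m+\alpha^2}e^{-\alpha\xi}$ (Lemma \ref{lem:goodBCH}, verified rigorously via Airy integral identities). One also needs trace-norm estimates to show the remainder $\wt\Omega_{\alpha,L}\to 0$, which is what makes the limit \eqref{eq:limL} legitimate. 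None of this appears in your sketch; ``the two pieces separate out'' is the conclusion, not an argument.

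Second, your step (d) is wrong in substance. The paper does \emph{not} identify the limiting determinant as a similarity transform of $P_mK_\alpha P_m$ via the formula for $K_\infty(\alpha,\cdot;\alpha,\cdot)$. That similarity is used only to show $G^{2\to1}_\alpha(m)=\pp(\Bt(\alpha)\le m)$, a statement made before the proof begins, and it is not available as a bridge from the limiting kernel: what comes out of the $L\to\infty$ limit is $\det\bigl(I-\K P_{\m+\alpha^2}\K-\K e^{\alpha\xi}\varrho_{\m+\alpha^2}e^{-\alpha\xi}\bar P_{\m+\alpha^2}\K\bigr)$, which bears no visible resemblance to $K_\infty$. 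Instead the paper conjugates by the Airy transform $A$ using $\K=A\bar P_0A^*$, then by the reflection $\varrho_0$, and shifts variables by $-m$, arriving at $\det(I-P_mE_1P_m-P_mE_2P_m)$ where $E_1=(K^1_\alpha)^*$ and $E_2=K^2_\alpha$; invariance of determinants under adjoints then gives \eqref{eq:Ga}. Finally, your last paragraph about the tail bound $e^{-\frac43m^{3/2}}$ concerns Proposition \ref{prop:tail}, not Theorem \ref{thm:1to2}, and should not be part of this proof.
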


We remark that the equality is easy to obtain in the limits $\alpha\to\infty$ and
$\alpha\to-\infty$.  Since the Airy$_2$ process is stationary one expects that, as
$\alpha\to-\infty$, $\sup_{t\leq\alpha}\big(\aip(t)-t^2\big)$ is attained at
$t\approx\alpha$, and thus
$\sup_{t\leq\alpha}\big(\aip(t)-t^2\big)+\min\{0,\alpha\}^2\approx\aip(\alpha)$, which has
distribution $F_{\rm GUE}(m)$.  For the right hand side, observe that for $\alpha<0$ the
kernel $K_\alpha^2$ equals the \emph{Airy kernel} $\K$, which is given explicitly by
\begin{equation}
\label{airykernel}
\K(x,y)=\int_0^{\infty}d\lambda\Ai(x+\lambda)\Ai(y+\lambda).
\end{equation}
In addition, one can check that, due to the exponential factor
$e^{2\alpha\lambda}$, $P_mK_\alpha^1P_m$ goes to 0 in trace norm as $\alpha\to-\infty$ (see
\eqref{eq:G21toK}). Consequently,
\begin{equation}\label{eq:toGUE}
\lim_{\alpha\to-\infty}G^{2\to1}_\alpha(m)=\det\!\big(I-P_m\K P_m\big)
\end{equation}
which is also $F_{\rm GUE}(m)$.
  
On the other hand, as $\alpha\to\infty$, the left hand side becomes
$\pp\!\left(\sup_{t\in\rr}\big(\aip(t)-t^2\big)\leq m\right)=F_{\rm
  GOE}(4^{1/3}m)$. For  the right hand side, it is not hard to check that $P_mK^2_\alpha P_m$ goes to 0 in trace norm as
$\alpha\to\infty$. In addition, using \eqref{eq:Kinfty2} and (A.6) of \cite{bfs} with
$\tau_1=\tau_2=\alpha$ we get for $\alpha>0$
\[K^1_\alpha(x,y)=2^{-1/3}\Ai(2^{-1/3}(x+y)) -\overline K^1_\alpha(x,y)\] with
$\overline K^1_\alpha(x,y)=\int_{-\infty}^0d\lambda\,e^{2\alpha\lambda}\Ai(x+\alpha^2+\lambda)\Ai(y+\alpha^2-\lambda)$,
and one can check that $P_m\overline K^1_\alpha P_m$ goes to 0 in trace norm as
$\alpha\to\infty$ (see the comment following \eqref{eq:G21toK}). The first term on the right hand side above
corresponds to the kernel $\wt B(x,y)=2^{-1/3}\Ai(2^{-1/3}(x+y))$, and we deduce after
changing variables $x\mapsto2^{-2/3}x$, $y\mapsto2^{-2/3}y$ in the resulting determinant
that
\begin{equation}
  \lim_{\alpha\to\infty}G^{2\to1}_\alpha(m)=\det\!\big(I-P_{4^{1/3}m}BP_{4^{1/3}m}\big),\label{eq:toGOE}
\end{equation}
where $B(x,y)=\frac12\Ai(\frac12(x+y))$, which is also equal to  $F_\mathrm{GOE}(4^{1/3}m)$
\cite{ferrariSpohn}. 

The fact that $G^{2\to1}_\alpha$ crosses over between the GUE and GOE distributions is of
course a particular case of the crossover property of the Airy$_{2\to1}$ process. Note
that the scaling by $4^{1/3}$ in the GOE end of this interpolation implies that both ends
satisfy the same asymptotics $\log\big(1-G^{2\to1}_{\pm\infty}(m)\big)\sim-\frac43m^{3/2}$
as $m\to\infty$\footnote{This is due to the known asymptotics $\log\!\big(1-F_{\rm
    GOE}(m)\big)\sim-\frac23m^{3/2}$ and $\log\!\big(1-F_{\rm GUE}(m)\big)\sim-\frac43m^{3/2}$,
  which follow from the formulas for these distributions in terms of the Painlev\'e II
  function \cite{tracyWidom,tracyWidom2}.}. In fact, the same upper bound holds for all
$\alpha\in\rr$:
  
\begin{prop}\label{prop:tail} For every $\alpha\in \rr$, there is a $c>0$ such that,
  \begin{equation}
    1-G^{2\to1}_{\alpha}(m)\leq cm\hspace{0.1em}e^{-\frac43m^{3/2}+\alpha m}\qquad\text{as}~m\to\infty.
  \end{equation}
\end{prop}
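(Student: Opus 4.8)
The plan is to estimate $1 - G^{2\to1}_\alpha(m)$ directly from the Fredholm determinant expansion. Writing $G^{2\to1}_\alpha(m) = \det(I - P_m K_\alpha P_m)$ with $K_\alpha = K^1_\alpha + K^2_\alpha$, I would first bound $1 - \det(I - A) \le \|A\|_1 \, e^{\|A\|_1 + 1}$ (or a similar elementary consequence of the expansion $\det(I-A) = \sum_n \frac{(-1)^n}{n!}\tr(\Lambda^n A)$), so that the whole problem reduces to getting a bound of the form $\|P_m K_\alpha P_m\|_1 \le c\, m\, e^{-\frac43 m^{3/2} + \alpha m}$ together with a uniform-in-$m$ (for $m$ large) control of the trace norm so the exponential prefactor $e^{\|A\|_1}$ stays bounded. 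So the heart of the matter is a trace-norm estimate on $P_m K^i_\alpha P_m$ for $i = 1, 2$.

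The second step is to control $\|P_m K^2_\alpha P_m\|_1$. Since $K^2_\alpha(x,y) = \int_0^\infty d\lambda\, \Ai(x + \lambda + \max\{0,\alpha\}^2)\Ai(y + \lambda + \max\{0,\alpha\}^2)$ is, up to a shift of both arguments by $\max\{0,\alpha\}^2$, exactly the Airy kernel $\K$, one can factor $P_m K^2_\alpha P_m = (P_m A^2_\alpha)(A^2_\alpha P_m)^*$ where $A^2_\alpha(x,\lambda) = \Ai(x + \lambda + \max\{0,\alpha\}^2)\uno{\lambda > 0}$, and bound the trace norm by the product of the two Hilbert–Schmidt norms. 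Each Hilbert–Schmidt norm is $\big(\int_m^\infty dx \int_0^\infty d\lambda\, \Ai(x+\lambda+\max\{0,\alpha\}^2)^2\big)^{1/2}$, and using the standard Airy asymptotics $\Ai(x) \le C e^{-\frac23 x^{3/2}}$ for $x \ge 0$ one gets decay $e^{-\frac23 m^{3/2}}$ from each factor, hence $e^{-\frac43 m^{3/2}}$ for the product, with polynomial corrections absorbable into the $cm$ and a harmless constant from the $\max\{0,\alpha\}^2$ shift (for $\alpha > 0$ this even helps). This term therefore contributes $\le c\, m\, e^{-\frac43 m^{3/2}}$, which is stronger than what is claimed.

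The third and main step is $\|P_m K^1_\alpha P_m\|_1$, where the exponential factor $e^{2\alpha\lambda}$ is the source of the $e^{\alpha m}$ in the bound. Here $K^1_\alpha(x,y) = \int_0^\infty d\lambda\, e^{2\alpha\lambda}\Ai(x - \lambda + \max\{0,\alpha\}^2)\Ai(y + \lambda + \max\{0,\alpha\}^2)$, so I would again split as a product of a Hilbert–Schmidt operator with kernel $e^{\alpha\lambda}\Ai(x - \lambda + \max\{0,\alpha\}^2)\uno{\lambda>0}$ and one with kernel $e^{\alpha\lambda}\Ai(y + \lambda + \max\{0,\alpha\}^2)\uno{\lambda>0}$. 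The second factor, as before, has Hilbert–Schmidt norm of order $e^{-\frac23 m^{3/2}}$ once the integral in $\lambda$ is estimated (the Gaussian-type decay of $\Ai^2$ against $e^{2\alpha\lambda}$ makes the $\lambda$-integral finite, contributing only a constant depending on $\alpha$). The first factor is the delicate one: one needs $\int_m^\infty dx \int_0^\infty d\lambda\, e^{2\alpha\lambda}\Ai(x - \lambda + \max\{0,\alpha\}^2)^2$. The point is that $\Ai(x-\lambda+\cdots)$ does not decay when $\lambda$ is comparable to $x$, so the naive bound fails; instead one optimizes: for fixed $x \ge m$, the integrand $e^{2\alpha\lambda}\Ai(x-\lambda+c)^2$ is, via $\Ai(u)^2 \le C e^{-\frac43 u^{3/2}}$, of the rough size $e^{2\alpha\lambda - \frac43(x-\lambda+c)^{3/2}}$, and on the relevant range this is dominated by matching the growth $e^{2\alpha\lambda}$ against the decay, producing a factor like $e^{\alpha x}$ after the $\lambda$-integral (one can see this by completing the ``saddle'' $\lambda \approx x$ gives $e^{2\alpha x}$ for that piece, but the $\lambda$ small region gives $e^{-\frac43 x^{3/2}}$; the crossover is handled by a straightforward split of the $\lambda$-integral at, say, $\lambda = x/2$, or by recognizing $K^1_\alpha$ as a shift of an Airy-type kernel via the identity $\int_0^\infty e^{2\alpha\lambda}\Ai(a-\lambda)\Ai(b+\lambda)\,d\lambda$ and using known exponential bounds for such objects — e.g.\ the bound \eqref{eq:K1anorm} referenced in the excerpt). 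This gives the $x$-integral $\int_m^\infty e^{2\alpha x}\cdot(\text{poly})\,dx$ for $\alpha < 0$ type control, or more carefully $e^{-\frac43 m^{3/2} + \alpha m}\cdot\text{poly}(m)$ after combining with the $e^{-\frac23 m^{3/2}}$ from the second Hilbert–Schmidt factor and taking square roots.

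The main obstacle is precisely this last estimate: handling the non-decaying region $\lambda \sim x$ in $\Ai(x - \lambda + \max\{0,\alpha\}^2)$ against the growing weight $e^{2\alpha\lambda}$. The clean way around it, which I would adopt, is to avoid re-deriving everything and instead invoke the trace-norm bound \eqref{eq:K1anorm} on $P_m K^1_\alpha P_m$ that the paper has already set aside (it is cited in the Remark after the definition of $G^{2\to1}_\alpha$ and again when discussing the $\alpha \to \pm\infty$ limits): that bound should be exactly of the form $\|P_m K^1_\alpha P_m\|_1 \le c\,m\,e^{-\frac43 m^{3/2} + \alpha m}$ (or with an extra constant), at which point combining it with the $K^2_\alpha$ estimate and the elementary determinant inequality $1 - \det(I - A) \le \|A\|_1 e^{1 + \|A\|_1}$ finishes the proof, noting that $\|P_m K_\alpha P_m\|_1 \to 0$ as $m \to \infty$ so the prefactor $e^{1 + \|A\|_1}$ is bounded and can be swallowed into $c$.
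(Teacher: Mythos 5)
Your proposal has a genuine gap, and it is located exactly where you flag the difficulty and then try to outsource it.

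Your plan is to reduce the claim to a trace-norm bound via the elementary first-order estimate $1-\det(I-A)\le\|A\|_1 e^{1+\|A\|_1}$, and then to supply that trace-norm bound by invoking \eqref{eq:K1anorm}, which you guess ``should be exactly of the form $\|P_m K^1_\alpha P_m\|_1\le cm\,e^{-\frac43 m^{3/2}+\alpha m}$.'' But \eqref{eq:K1anorm} actually reads $\|P_m K^1_\alpha P_m\|_1\le c\,e^{-\frac23 m^{3/2}}$. This slower decay is not an artifact of the paper being lazy: the factorization $K^1_\alpha=(B^1 e^{-\xi}P_0)(P_0 e^{(1+\alpha)\xi}B^2)$ that the paper uses genuinely produces a factor of size $e^{-m}$ from the $\Ai(x-\lambda+\bar\alpha)$ half (where the Airy function does not decay in $\lambda$ and only the inserted $e^{-\lambda}$ weight saves you), times $e^{-\frac23 m^{3/2}}$ from the $\Ai(y+\lambda+\bar\alpha)$ half. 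Plugging this into your first-order determinant bound would only yield $1-G^{2\to 1}_\alpha(m)\le c\,e^{-\frac23 m^{3/2}}$, which is strictly weaker than the Proposition's $cm\,e^{-\frac43 m^{3/2}+\alpha m}$ for every fixed $\alpha$ and large $m$.

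The idea you are missing is the paper's Lemma \ref{lem:detBd}, the \emph{second-order} determinant estimate $\det(I+A)-1-\tr(A)\le\frac12\|A\|_1^2 e^{\|A\|_1}$. This is what lets the weak trace-norm bound $c\,e^{-\frac23 m^{3/2}}$ suffice: its \emph{square} decays like $e^{-\frac43 m^{3/2}}$, which is the target rate, while the first-order term $\tr(P_m K^1_\alpha P_m)$ is computed directly (not via Hilbert--Schmidt factorization) and is shown to be $\le cm\,e^{-\frac43 m^{3/2}+\alpha m}$ by splitting the $\lambda$-integral at $\lambda=x$, substituting $\lambda=x\gamma$, and using the pointwise Airy asymptotics $|\!\Ai(z)|\le ce^{-\frac23 z^{3/2}}$ together with $(1-\gamma)^{3/2}+(1+\gamma)^{3/2}\ge 2$. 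The distinction matters because $K^1_\alpha$ is not self-adjoint, so the trace can be far smaller than the trace norm; the paper exploits exactly that. (Your $K^2_\alpha$ analysis and the overall reduction via the Lipschitz continuity of the determinant are fine and match the paper.)

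So: correct first step, correct handling of $K^2_\alpha$, correct identification of the $\lambda\sim x$ region as the obstacle in $K^1_\alpha$ — but the move of invoking \eqref{eq:K1anorm} as stated would not close the argument, and the key technical device that does close it (the trace-plus-$\|A\|_1^2$ bound of Lemma \ref{lem:detBd}, combined with a direct estimate of $\tr(P_m K^1_\alpha P_m)$) is absent from your proposal.
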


The proof of Theorem \ref{thm:1to2} is based on a continuum statistics formula for the
Airy$_2$ process, developed in \cite{cqr}, which is well adapted to such variational
problems. Fix a function $g\in H^1([\ell,r])$ and introduce an operator
$\Theta^g_{[\ell,r]}$ which acts on $L^2(\rr)$ as follows:
$\Theta^g_{[\ell,r]}f(\cdot)=u(r,\cdot)$, where $u(r,\cdot)$ is the solution at time $r$
of the boundary value problem
\begin{equation}
  \begin{aligned}
    \p_tu+Hu&=0\quad\text{for }x<g(t), ~t\in (\ell,r)\\
    u(\alpha,x)&=f(x)\uno{x<g(\alpha)}\\
    u(t,x)&=0\quad\text{for }x\ge g(t)
  \end{aligned}\label{eq:bdval}
\end{equation} for the \emph{Airy Hamiltonian},
\[H=-\p_x^2+x.\]
The formula  reads
\begin{equation}
  \pp\!\left(\aip(t)\leq g(t)\text{ for }t\in[\ell,r]\right)
  =\det\!\left(I-K_{\Ai}+\Theta^g_{[\ell,r]}e^{(r-\ell)H}\K\right).\label{eq:firstprob}
\end{equation}
Choosing $g(t)=m-\min\{0,\alpha\}^2+t^2$ and $r=\alpha$,
\eqref{eq:firstprob} gives an explicit formula for the probability in Theorem
\ref{thm:1to2} in the limit $\ell\to-\infty$. The proof will consist on computing this
limit and showing that it coincides with $G^{2\to1}_\alpha(m)$.

Note that this strategy is considerably more difficult to implement for the Airy$_{\rm
  stat}$, Airy$_{2\to {\rm BM}}$ and Airy$_{1\to {\rm BM}}$ processes, because it involves
computing an expectation of the Fredholm determinant in \eqref{eq:firstprob} with respect
to Brownian motion paths. For example, in the stationary case, for which the one-point
distribution is given by $\pp\!\left(\mathcal{A}_{\rm stat}(t)\leq m\right)=F_0(m)$ with
$F_0$ the Baik-Rains distribution \cite{baikRainsF0}, we would need to check the formula
\begin{equation}\label{cc1'}
F_0(m) =\lim_{L\to \infty}\ee\!\left(\det\!\left(I-K_{\Ai}+\Theta^{B(\cdot)+(\cdot)^2 +m}_{[-L,L]}  e^{2LH}\K\right)\right),
\end{equation} 
where $\Theta^{B(\cdot)+(\cdot)^2 +m}_{[-L,L]}$ has an explicit kernel, which can be
derived from Theorem \gref{thm:thetaLgen}, and is given by
\begin{equation}\label{eq:ThetaL}
\Theta^{B(\cdot)+(\cdot)^2 +m}_{[-L,L]} (x,y)=e^{-L( x+y)+2L^3/3}\frac{e^{-(x-y)^2/8L}}{\sqrt{8\pi L}}
\widehat{\pp}_{\hat B(-L)=x-L^2\atop\hat B(L)=y-L^2}\!\left(\hat B(s)\leq B(s)+m\text{ on }[-L,L]\right),
\end{equation}
where $\hat B$ a Brownian bridge from $x-L^2$ at time $-L$ to $y-L^2$ at time $L$, $B$ is
an independent two sided Brownian motion with $B(0)=0$ (both with diffusion coefficient
$2$), and the expectation $\ee$ in \eqref{cc1'} is with respect to $B$ while the
probability $\widehat\pp$ in \eqref{eq:ThetaL} is with respect to $\hat B$.

\subsection{Connection with last passage percolation}

It is worth remarking that the general picture we have described holds essentially exactly
at the discrete level in the case of last passage percolation. Here one considers a
family $\big\{\hspace{-0.05em}w(i,j)\}_{i,j\in\zz^+}$ of independent identically distributed random
variables and lets $\Pi_n$ be the collection of up-right paths of length $n$, that is,
paths $\pi=(\pi_0,\dotsc,\pi_n)$ such that $\pi_i-\pi_{i-1}\in\{(1,0),(0,1)\}$. The
\emph{point-to-point last passage time} is defined, for $m,n\in\zz^+$, by
\[L^{\rm point}(m,n)=\max_{\pi\in\Pi_{m+n}:(0,0)\to(m,n)}\sum_{i=0}^{m+n}w(\pi(i)),\] where the
notation in the subscript in the maximum means all up-right paths connecting the origin to
$(m,n)$. Similarly, the \emph{point-to-line last passage time} is defined by
\[L^{\rm line}(n)=\max_{k=-n,\dotsc,n}L^{\rm point}(n-k,n+k).\]

Next one defines the process $t\mapsto H^{\rm point}_n(t)$ by linearly interpolating the
values given by scaling $L^{\rm point}(m,n)$ through the relation
\begin{equation}\label{eq:10010}
L^{\rm point}(n+y,n-y)=c_1n+c_2n^{1/3}H^{\rm point}_n(c_3n^{-2/3}y),
\end{equation}
where the constants $c_i$ depend only on the distribution of the $w(i,j)$. In the special
case where the $w(i,j)$ have a geometric distribution, \citet{johansson} showed that
\begin{equation}\label{eq:johFCLT}
  H^{\rm point}_n(t) \to \aip(t)-t^2
\end{equation}
in distribution, in the topology of uniform convergence on compact sets, where $\aip$ is
the Airy${}_2$ process. One can also define a rescaled version $H^{\rm line}_n$ of
$L^{\rm line}(n)$, obtaining the relation
\[H^{\rm line}_n=\sup_{t\in\rr}\big\{H^{\rm point}(t)\big\}\] (here we are setting $H^{\rm
  point}_n(t)=0$ for $|t|>c_3n^{1/3}$). It is known \cite{baikRains} that $H^{\rm line}_n$
converges in distribution to a GOE Tracy-Widom random variable, and hence
\eqref{eq:johFCLT} allows to take $n\to\infty$ in the last equality to recover
\eqref{eq:GOE} (this was Johansson's original proof of \eqref{eq:GOE}, the proof in
\cite{cqr} is based on \eqref{eq:bdval}).

In principle, this idea can be extended to the obtain variational formulas for the other
Airy processes. For example, one could attempt to replace point-to-line last passage times
by point-to-half-line last passage times to recover \eqref{eq:conj}. Unfortunately, the
connection between Airy$_{2\to1}$ (and the other Airy processes) and last passage
percolation is made through translating the corresponding results for the totally
asymmetric exclusion process, and by doing this the boundary conditions end up away from
the line $\{(n-k,n+k),\,k=-n,\dotsc,n\}$, so \eqref{eq:johFCLT} is not directly
applicable. In work in progress \citet{corwinLiuWang}  obtain an
improved version of the slow decorrelation result proved in \cite{corwinFerrariPeche1},
which would lead to a general version of formulas for last passage times in
last passage percolation in terms of variational problems for the Airy$_2$ process. In
particular,  such a result would give a proof of the conjectures made in Section
\ref{sec:heuristics} and, as a consequence, would show that \eqref{cc1'}, and similar
formulas for the other Airy processes, hold.

\vs

\paragraph{\bf Acknowledgements}
JQ and DR were supported by the Natural Science and Engineering Research Council of
Canada, and DR was supported by a Fields-Ontario Postdoctoral Fellowship and by Fondecyt
Grant 1120309. Part of this work was done during the Fields Institute program ``Dynamics
and Transport in Disordered Systems" and the authors would like to thank the Fields
Institute for its hospitality.

\section{Derivation of the formula}\label{sec:deriv}

% \subsection{Density and determinantal formulae}

As in \cite{cqr,mqr} we will first give an expression for the distribution of
the supremum over a finite interval and then take a limit. Thus we choose an $L>-\alpha$, which
will later be taken to infinity, and work on the interval $[-L,\alpha]$. For notational
simplicity we will write
\[\overline m=m-\min\{0,\alpha\}^2.\]

% We start by writing a formula \cite{cqr} for the probability that
% $\aip(t)\leq g(t)$ on $[-L,\alpha]$ for a given function $g\in
% H^1([-L,\alpha])$. Introduce the operator $\Theta^g_{[-L,\alpha]}$ which acts on
% $L^2(\rr)$ as follows: $\Theta^g_{[-L,\alpha]}f(\cdot)=u(L,\cdot)$, where $u(L,\cdot)$ is
% the solution at time $L$ of the boundary value problem
% \begin{equation}
%   \begin{aligned}
%     \p_tu+Hu&=0\quad\text{for }x<g(t), ~t\in (\alpha,L)\\
%     u(\alpha,x)&=f(x)\uno{x<g(\alpha)}\\
%     u(t,x)&=0\quad\text{for }x\ge g(t)
%   \end{aligned}\label{eq:bdval}
% \end{equation} for the \emph{Airy Hamiltonian},
% \[H=-\p_x^2+x.\]
% Then Theorem \gref{thm:aiL} gives
% \begin{equation}
%   \pp\!\left(\aip(t)\leq g(t)\text{ for }t\in[-L,\alpha]\right)
%   =\det\!\left(I-K_{\Ai}+\Theta^g_{[-L,\alpha]}e^{(\alpha+L)H}\K\right).\label{eq:firstprob}
% \end{equation}
% \note{this is repetative}

We recall that the shifted Airy functions $\phi_\lambda(x)=\Ai(x-\lambda)$ are the
generalized eigenfunctions of the Airy Hamiltonian, as
$H\phi_\lambda=\lambda\phi_\lambda$, and the Airy kernel $K_{\Ai}$ is the projection of
$H$ onto its negative generalized eigenspace (see Remark \gref{airyrem}). Therefore
$e^{(\alpha+L)H}\K$ has integral kernel
\begin{equation}
  \label{eq:eLHK}
  e^{(\alpha+L)H}\K(x,y)=\int_{-\infty}^0d\lambda\,e^{(\alpha+L)\lambda}\Ai(x-\lambda)\Ai(y-\lambda).
\end{equation}
We also deduce that $e^{(\alpha+L)H}\K=\K e^{(\alpha+L)H}\K$, so we may use the cyclic property
of determinants to rewrite \eqref{eq:firstprob} as
\begin{equation}
  \pp\!\left(\aip(t)\leq g(t)\text{ for }t\in[-L,\alpha]\right)
  =\det\!\left(I-K_{\Ai}+e^{(\alpha+L)H}\K\Theta^g_{[-L,\alpha]}\K\right).
  \label{eq:aiL}
\end{equation}

We will apply the above for $g(t)=t^2+\m$, and for this choice of $g$ we will write
$\Theta_{\alpha,L}$ for $\Theta^g_{[-L,\alpha]}$. In this case the resulting kernel can be
computed explicitly, and a minor variation of \geqref{eq:thetaL} gives
\begin{equation}\label{eq:thetag}
  \Theta_{\alpha,L}(x,y)=\bar P_{\m+L^2}e^{-(\alpha+L)H}\bar P_{\m+\alpha^2}-\bar P_{\m+L^2}R_{\alpha,L}\bar P_{\m+\alpha^2},
\end{equation}
where $\bar P_m=I-P_m$ denotes the projection onto the interval $(-\infty,m]$ and $R_{\alpha,L}$ is given by
\begin{equation}
  \label{eq:R}
  R_{\alpha,L}(x,y)=\frac{1}{\sqrt{4\pi(\alpha+L)}}e^{-Lx-\alpha y+(\alpha^3+L^3)/3-\frac{(x-L^2+y-\alpha^2-2\m)^2}{4(\alpha+L)}}.
\end{equation}
Following \cite{cqr} we decompose $\Theta_{\alpha,L}$ as
\[\Theta_{\alpha,L}=e^{-(\alpha+L)H}\bar P_{\m+\alpha^2}-R_{\alpha,L}\bar P_{\m+\alpha^2}-\Omega_{\alpha,L},\]
where $\Omega_{\alpha,L}=P_{\m+L^2}(e^{-(\alpha+L)H}-R_{\alpha,L})\bar P_{\m+\alpha^2}$. Using this in
\eqref{eq:aiL} we can write
\begin{multline}
\pp\!\left(\aip(t)\leq t^2\text{ for }t\in[-L,\alpha]\right)\\
  =\det\!\left(I-\K P_{\m+\alpha^2}K_{\Ai}-
    e^{(\alpha+L)H}\K R_{\alpha,L}\bar P_{\m+\alpha^2}K_{\Ai}
    -e^{(\alpha+L)H}\K\Omega_{\alpha,L}K_{\Ai}\right).\label{eq:aiL2}
\end{multline}
We will show below that
\begin{equation}
  \label{eq:omega}
  \wt\Omega_{\alpha,L}:=e^{(\alpha+L)H}\K\Omega_{\alpha,L}\K\xrightarrow[L\to\infty]{}0
\end{equation}
in trace norm. Then since the mapping $A\mapsto\det(I+A)$ is continuous in the space of
trace class operators (see \eqref{eq:detCont} below), all that is left to do in order to
take $L\to\infty$ in \eqref{eq:aiL2} is to compute the limit of the operator
$e^{(\alpha+L)H}\K R_{\alpha,L}$

We will first proceed formally to identify the limit, and then verify it in Lemma
\ref{lem:goodBCH}. Since $\K$ is a projection and $H$ leaves $\K$ invariant, we will
pretend that $e^{(\alpha+L)H}$ and $\K$ commute, so we have to compute the limit of
$e^{(\alpha+L)H}R_{\alpha,L}$.  Define the \emph{reflection operator} $\varrho_m$ by
\[\varrho_mf(x)=f(2m-x).\]
Then the operator $R_{\alpha,L}$ defined in \eqref{eq:R} can be rewritten as
\begin{equation}
R_{\alpha,L}=e^{(\alpha^3+L^3)/3}e^{-L\xi}e^{(\alpha+L)\Delta}\varrho_{\m+(L^2+\alpha^2)/2}e^{-\alpha\xi}.\label{eq:Rgauss}
\end{equation}
Here $e^{r\xi}$ ($\xi$ stands for a generic variable) denotes the multiplication operator
$(e^{r\xi}f)(x)=e^{rx}f(x)$ and $\Delta$ is the Laplacian, so $e^{r\Delta}$ is the heat
kernel as in the introduction. The advantage of this form for $R_{\alpha,L}$ is that it
will allow us to use the Baker-Campbell-Hausdorff formula to compute formally
$e^{(\alpha+L)H}R_{\alpha,L}$.

We will use the following identities, where $[\cdot,\cdot]$ denotes commutator:
\[[H,\Delta]=[\xi,\Delta]=-2\nabla,\qquad[H,\nabla]=[\xi,\nabla]=-I,\qquad[H,\xi]=-2\nabla.\]
If $A$ and $B$ are two operators such that $[A,[A,B]]=c_1I$ and $[B,[A,B]]=c_2I$ for some
$c_1,c_2\in\rr$, then the Baker-Campbell-Hausdorff formula\footnote{The
  Baker-Campbell-Hausdorff formula can be found in most introductory books on Lie groups
  and algebras. A general version can be found in \cite{dynkinBCH}.  However, in this very simple context, it is more readily computed by
  hand.} reads
\begin{equation}
  \label{eq:bch}
  e^{A}e^{B}=e^{A+B+\frac12[A,B]+\frac1{12}[A,[A,B]]-\frac1{12}[B,[A,B]]}.
\end{equation}
In particular, if $[A,B]=cI$ then
\begin{equation}
  \label{eq:bch2}
  e^{A+B}=e^{A}e^{B}e^{-\frac12[A,B]}.
\end{equation}
Using \eqref{eq:bch} we have
\[e^{-L\xi}e^{(\alpha+L)\Delta}=e^{L^2(L+\alpha)/6}e^{(\alpha+L)\Delta+L(\alpha+L)\nabla-L\xi}.\]
Using \eqref{eq:bch} again we deduce that
\begin{align}
  e^{(\alpha+L)H}e^{-L\xi}e^{(\alpha+L)\Delta}&=e^{L^2(\alpha+L)/6}e^{(\alpha+L)H}e^{(\alpha+L)\Delta+L(\alpha+L)\nabla-L\xi}\\
  &=e^{\alpha^3/6-\alpha L^2/2-L^3/3}e^{\alpha\xi+(L^2-\alpha^2)\nabla}.
\end{align}
By \eqref{eq:bch2} we have $e^{\alpha\xi+(L^2-\alpha^2)\nabla}
=e^{\alpha(L^2-\alpha^2)/2}e^{\alpha\xi}e^{(L^2-\alpha^2)\nabla}$,
so the above identity gives
\[e^{(\alpha+L)H}e^{-L\xi}e^{(\alpha+L)\Delta}=e^{-(\alpha^3+L^3)/3}e^{\alpha\xi}e^{(L^2-\alpha^2)\nabla}.\]
Using this in \eqref{eq:Rgauss} we deduce that
\[e^{(\alpha+L)H}R_{\alpha,L}
=e^{\alpha\xi}e^{(L^2-\alpha^2)\nabla}\varrho_{\m+(L^2+\alpha^2)/2}e^{-\alpha\xi}.\]
Since $e^{r\nabla}$ is the shift operator $(e^{r\nabla}f)(x)=f(x+r)$, we have
$e^{r\nabla}\varrho_m=\varrho_{m-r/2}$, and we obtain
\[e^{(\alpha+L)H}R_{\alpha,L}=e^{\alpha\xi}\varrho_{\m+\alpha^2}e^{-\alpha\xi}.\]
The conclusion from the above is the following

\begin{lem}\label{lem:goodBCH}
% The kernel of $e^{(\alpha+L)H}\K R_{\alpha,L}$ is given by
% \[e^{(\alpha+L)H}\K R_{\alpha,L}(x,y)
% =\int_{-\infty}^0d\lambda\Ai(x-\lambda)\Ai(2\m+2\alpha^2-y-\lambda)e^{2t(\m+\alpha^2-y)}.\]
\[e^{(\alpha+L)H}\K R_{\alpha,L}=\K e^{\alpha\xi}\varrho_{\m+\alpha^2}e^{-\alpha\xi}.\]
\end{lem}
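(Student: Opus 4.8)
The statement of Lemma~\ref{lem:goodBCH} is that the formal Baker-Campbell-Hausdorff computation carried out just before it is in fact rigorous when sandwiched against $\K$. The plan is therefore to \emph{not} repeat the BCH manipulations as operator identities (which would require care about domains and the fact that $e^{r\Delta}$, $e^{r\xi}$, $e^{r\nabla}$ individually fail to be bounded), but instead to verify the single resulting identity $e^{(\alpha+L)H}\K R_{\alpha,L}=\K e^{\alpha\xi}\varrho_{\m+\alpha^2}e^{-\alpha\xi}$ directly at the level of integral kernels. The left-hand side is a composition of two operators with explicit kernels: $e^{(\alpha+L)H}\K$ has the kernel \eqref{eq:eLHK}, and $R_{\alpha,L}$ has the Gaussian kernel \eqref{eq:R}. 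So the first step is to write $\big(e^{(\alpha+L)H}\K R_{\alpha,L}\big)(x,y)$ as the double integral
\[
\int_{-\infty}^0 d\lambda\int_{-\infty}^\infty dz\,e^{(\alpha+L)\lambda}\Ai(x-\lambda)\Ai(z-\lambda)\,\frac{1}{\sqrt{4\pi(\alpha+L)}}e^{-Lz-\alpha y+(\alpha^3+L^3)/3-\frac{(z-L^2+y-\alpha^2-2\m)^2}{4(\alpha+L)}},
\]
and to check absolute convergence so that Fubini applies and the order of integration is free (the Airy factors decay superexponentially in $z\to+\infty$ and $e^{-Lz}$ controls $z\to-\infty$ for $L>-\alpha$, uniformly for $\lambda\le 0$; this is routine).

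The second step is to perform the Gaussian integral in $z$. Using the standard identity $\int dz\, e^{-az^2+bz}\Ai(z-\lambda)=\dots$ — more conveniently, recall that $\big(e^{(\alpha+L)\Delta}f\big)(x)=\int \frac{dz}{\sqrt{4\pi(\alpha+L)}}e^{-(x-z)^2/4(\alpha+L)}f(z)$ and that $e^{r\Delta}$ acts on the shifted Airy functions by $e^{r\Delta}\phi_\lambda=e^{r\lambda}e^{-r\xi}\cdots$ — one reorganizes the exponent in \eqref{eq:R} to recognize the $z$-integral as $e^{(\alpha+L)\Delta}$ applied to the function $z\mapsto\Ai(z-\lambda)$ evaluated after the reflection and the two multiplication operators, exactly matching the factorization \eqref{eq:Rgauss} for $R_{\alpha,L}$. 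Concretely: substitute the right-hand side $\K e^{\alpha\xi}\varrho_{\m+\alpha^2}e^{-\alpha\xi}$, whose kernel is $\int_{-\infty}^0 d\lambda\, e^{\alpha\lambda}\Ai(x-\lambda)\cdot e^{\alpha x'}$ ... — rather, compute its kernel as $\int_{-\infty}^0 d\lambda\,\Ai(x-\lambda)\,e^{\alpha x}\Ai(2(\m+\alpha^2)-2\alpha y? )$ — and show both sides reduce to the same explicit expression. The cleanest way to organize this is to verify the chain of one-step identities used in the formal derivation as \emph{kernel} identities: that $e^{(\alpha+L)H}$ acting on the left of a kernel of the form $e^{(\alpha+L)\lambda}\Ai(x-\lambda)$ (times anything in the second variable) just multiplies by $e^{(\alpha+L)\lambda}$ — i.e.\ $e^{(\alpha+L)H}\phi_\lambda=e^{(\alpha+L)\lambda}\phi_\lambda$ — so that the only real content is the Gaussian integral reorganization, and then the action of $e^{(L^2-\alpha^2)\nabla}$ on the reflection, $e^{r\nabla}\varrho_m=\varrho_{m-r/2}$, which is an elementary change of variables.

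The third and final step is to collect constants: one must check that the accumulated scalar prefactors from the Gaussian integration and the completion of the square — $e^{(\alpha^3+L^3)/3}$ from \eqref{eq:R}, the Gaussian normalization, the cross terms $e^{L^2(\alpha+L)/6}$, $e^{\alpha^3/6-\alpha L^2/2-L^3/3}$, $e^{\alpha(L^2-\alpha^2)/2}$ appearing in the formal derivation — all cancel against $e^{-(\alpha^3+L^3)/3}$, leaving precisely $e^{\alpha\xi}\varrho_{\m+\alpha^2}e^{-\alpha\xi}$ composed on the left with $\K$. I expect the main obstacle to be purely bookkeeping: tracking these exponential constants and the shifts in the reflection center ($\m+(L^2+\alpha^2)/2\to\m+\alpha^2$ after the shift by $(L^2-\alpha^2)/2$) without sign or factor-of-two errors, and confirming the integrability/Fubini justification that lets one treat the formal BCH output as a genuine kernel identity. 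No deep input is needed beyond $H\phi_\lambda=\lambda\phi_\lambda$, the invariance of $\K$ under $H$ (both recalled in the excerpt), and the Gaussian integral; but since the formal computation already \emph{produced} the answer, the honest task here is simply to certify that the intermediate unbounded operators cause no harm once everything is paired with the projection $\K$, which regularizes on the left exactly as $\bar P_{\m+\alpha^2}$ and the structure of $R_{\alpha,L}$ regularize on the right.
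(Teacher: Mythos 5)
Your proposal takes essentially the same approach as the paper: rather than justifying the BCH manipulations as operator identities, the paper simply writes out the kernel of $e^{(\alpha+L)H}\K R_{\alpha,L}$ as the double integral you display, completes the square in $z$ to recognize the inner integral as a heat kernel applied to a shifted Airy function, and applies the identity $e^{t\Delta}\Ai(x)=e^{2t^3/3+tx}\Ai(x+t^2)$ to land directly on the kernel of $\K e^{\alpha\xi}\varrho_{\m+\alpha^2}e^{-\alpha\xi}$.

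One remark: the several aborted clauses in your step~2 (\emph{``Using the standard identity $\int dz\,e^{-az^2+bz}\Ai(z-\lambda)=\dots$''}, \emph{``$e^{r\Delta}\phi_\lambda=e^{r\lambda}e^{-r\xi}\cdots$''}) are circling around precisely the identity $e^{t\Delta}\Ai(x)=e^{2t^3/3+tx}\Ai(x+t^2)$, and to turn the outline into a proof you do need to state it and apply it with $t=\alpha+L$; the fragment $e^{r\Delta}\phi_\lambda=e^{r\lambda}e^{-r\xi}\cdots$ as written has the wrong sign in the exponential prefactor (the correct statement is $e^{r\Delta}\phi_\lambda(x)=e^{2r^3/3+r(x-\lambda)}\Ai(x-\lambda+r^2)$). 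Once that identity is in hand, the rest is indeed the bookkeeping you describe, and the Fubini check you include is routine but a reasonable thing to point out.
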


We postpone the proof of this lemma until the end of this section. Putting this formula and \eqref{eq:omega} in
\eqref{eq:aiL2} and using Lemma \gref{lem:fredholm} gives
\begin{equation}\label{eq:limL}
  \begin{aligned}
    \pp\!\left(\sup_{t\leq\alpha}\big(\aip(t)-t^2\big)\leq\m\right)
    &=\lim_{L\to\infty}\pp\!\left(\aip(t)\leq t^2\text{ for }t\in[-L,\alpha]\right)\\
    &=\det\!\left(I-\K P_{\m+\alpha^2}K_{\Ai}-\K e^{\alpha\xi}\varrho_{\m+\alpha^2}
      e^{-\alpha\xi}\bar P_{\m+\alpha^2}\K\right).
  \end{aligned}
\end{equation}

Having obtained an expression for the probability we are interested in, all that remains
to show is that it coincides with our definition of $G^{2\to1}_\alpha$ \eqref{eq:Ga}. We
recall (as can be seen directly from its definition \eqref{airykernel}) that the Airy
kernel can be expressed as $\K=A\bar P_0A^*$, where $A$ is the \emph{Airy transform} which
acts on $f\in L^2(\rr)$ as
\[Af(x)=\int_{-\infty}^\infty dz\Ai(x-z)f(z).\]
Since $A^*=A^{-1}$ we have by the cyclic property of determinants that the right hand side of
\eqref{eq:limL} equals
\[\det\!\left(I-\bar P_0A^*P_{\m+\alpha^2}A\bar P_0-\bar P_0A^*e^{\alpha\xi}
     \varrho_{\m+\alpha^2}e^{-\alpha\xi}\bar P_{\m+\alpha^2}A\bar P_0\right).\]
Recalling that $\varrho_0f(x)=f(-x)$ we have similarly that the last determinant equals
\begin{multline} 
  \det\!\left(I-\varrho_0\bar P_0A^*P_{\m+\alpha^2}A\bar P_0\varrho_0-\varrho_0\bar P_0A^*
    e^{\alpha\xi}\varrho_{\m+\alpha^2}e^{-\alpha\xi}\bar P_{\m+\alpha^2}A\bar P_0\varrho_0\right)\\
  =\det\!\left(I-P_0\varrho_0 A^*P_{\m+\alpha^2}A\varrho_0 P_0-P_0\varrho_0 A^*
    e^{\alpha\xi}\varrho_{\m+\alpha^2}e^{-\alpha\xi}\bar P_{\m+\alpha^2}A\varrho_0 P_0\right).
\end{multline}
Shifting the variables in the last determinant by $-m$ we deduce that
\begin{equation}
  \label{eq:modLim}
  \pp\!\left(\sup_{t\geq\alpha}\big(\aip(t)-t^2\big)\leq\m\right)
  =\det\!\left(I-P_mE_1P_m-P_mE_2P_m\right),
\end{equation}
where
\begin{align}
  E_1(x,y)&=\int_{-\infty}^{\m+\alpha^2}d\lambda
  \Ai(x-m+2\m+2\alpha^2-\lambda)e^{-2(\lambda-\m-\alpha^2)\alpha}\Ai(y-m+\lambda)\\
%  &=\int_{0}^\infty d\lambda \Ai(x+\lambda+\min\{0,\alpha\}^2)\Ai(y+\lambda+\min\{0,\alpha\}^2)\\
  \shortintertext{and}
  E_2(x,y)&=\int_{\m+\alpha^2}^\infty d\lambda \Ai(x-m+\lambda)\Ai(y-m+\lambda).
 % &=\int_{-\infty}^0 d\lambda\,e^{2\lambda}
%  \Ai(x+\lambda+\min\{0,\alpha\}^2)\Ai(y-\lambda+\max\{0,\alpha\}^2).
\end{align}
Shifting $\lambda$ by $\m+\alpha^2$ in both integrals and changing $\lambda$ to $-\lambda$
shows that $E_1(x,y)=K^1_\alpha(y,x)$ and $E_2=K^2_\alpha$,
whence the equality in Theorem \ref{thm:1to2} follows since $E_1^*=K^1_\alpha$ and $E_2^*=K_\alpha^2$.

All we have left is to prove \eqref{eq:omega}.  We will denote by $\|\cdot\|_{\rm op}$,
$\|\cdot\|_1$ and $\|\cdot\|_2$ respectively the operator, trace class and Hilbert-Schmidt
norms of operators on $L^2(\rr)$ (see Section \gref{sec:aiL} for the definitions or
\cite{simon} for a complete treatment). Recall that
\begin{equation}\label{eq:norms}
  \|AB\|_1\leq\|A\|_2\|B\|_2\qquad\text{and}\qquad\|AB\|_2\leq\|A\|_2\|B\|_{\rm op}.
\end{equation}

\begin{proof}[Proof of \eqref{eq:omega}]
  Let $\varphi(x)=(1+x^2)^{1/2}$ and define the multiplication operator
  $Mf(x)=\varphi(x)f(x)$. Then by \eqref{eq:norms} we have that
  %\begin{align}
    \[\|\wt\Omega_{\alpha,L}\|_1\leq\|e^{(\alpha+L)H}\K M^{-1}\|_2\,\|MP_{\m+L^2}(e^{-(\alpha+L)H}-R_{\alpha,L})\bar
    P_{\m+\alpha^2}\K\|_2\]
%    &\leq\|e^{(\alpha+L)H}\K M^{-1}\|_2\, \|MP_{\m+L^2}(e^{-(\alpha+L)H}-R_{\alpha,L})\bar
  %  P_{\m+\alpha^2}\|_2\,\|\K\|_{\rm op}.
%  \end{align}
    Now $\|e^{(\alpha+L)H}\K M^{-1}\|_2=\|M^{-1}e^{(\alpha+L)H}\K\|_2$ by the symmetry of
    $e^{(\alpha+L)H}\K$, and then \geqref{eq:sndHS} gives $\|e^{(\alpha+L)H}\K
    M^{-1}\|_2\leq c(\alpha+L)^{-1/2}$. Then to finish the proof it will be enough to
    estimate $\|MP_{\m+L^2}e^{-(\alpha+L)H}\bar P_{\m+\alpha^2}\K\|_2$ and
    $\|MP_{\m+L^2}R_{\alpha,L}\bar P_{\m+\alpha^2}\K\|_2$.

  We start with the second norm. By \eqref{eq:R}, \eqref{eq:norms} and the fact that $\K$
  is a projection we have
  \begin{multline}
    \|MP_{\m+L^2}R_{\alpha,L}\bar P_{\m+\alpha^2}\K\|^2_2\leq\|MP_{\m+L^2}R_{\alpha,L}\bar
    P_{\m+\alpha^2}\|^2_2\|\K\|^2_{\rm op}\\\leq\int_{\m}^\infty
    dx\,\varphi(x)^2\int_{-\infty}^{\m+\alpha^2}dy\, \frac{1}{4\pi(\alpha+L)}e^{-4L^3/3+2\alpha^3/3-2Lx
      -2\alpha y-\frac{(x+y-2\m-\alpha^2)^2}{4(\alpha+L)}},
  \end{multline}
  where we have performed the change of variables $x\mapsto x+L^2$. The inner Gaussian
  integral gives
  \[\frac{1}{4\sqrt{\pi(\alpha+L)}}e^{-4L^3/3+2(\alpha-L)x-4\m\alpha+4L\alpha^2+8\alpha^3/3}
  \left[1+{\rm erf}\!\left(\tfrac12(\alpha+L)^{-1/2}[4\alpha(\alpha+L)+x-\m]\right)\right],\]
  where ${\rm erf}(z)=2\pi^{-1/2}\int_0^zdt\,e^{-t^2}\leq1$. Then
  \[\|P_{\m+L^2}R_{\alpha,L}\bar P_{\m+\alpha^2}\|^2_2\leq
  Ce^{-4L^3/3+4L\alpha^2}\int_{\m}^\infty dx\,\varphi(x)^2e^{2(\alpha-L)x},\] which clearly goes to 0
  as $L\to\infty$.

  On the other hand one can check that $e^{-(\alpha+L)H}$ has integral kernel given by
  \[e^{-(\alpha+L)H}(x,y)=\frac{1}{\sqrt{4\pi(\alpha+L)}}e^{-Lx-\alpha
    y+(\alpha^3+L^3)/3-\frac{(x-y)^2}{4(\alpha+L)}}\] (this is done either by applying the
  Feynman-Kac and Cameron-Martin-Girsanov formulas as in \cite{cqr}, or directly by
  integrating this kernel against the kernel of $e^{(\alpha+L)H}$, which is given,
  similarly to \eqref{eq:eLHK}, by $\int_{-\infty}^\infty
  d\lambda\,e^{-(\alpha+L)\lambda}\Ai(x-\lambda)\Ai(y-\lambda)$). Note that this kernel is
  the same as the one given in \eqref{eq:R} only without the reflection in the Gaussian
  term. It is easy to check then that the same calculation as the one in the above
  paragraph shows that $\|MP_{\m+L^2}e^{-(\alpha+L)H}\bar
  P_{\m+\alpha^2}\K\|_2\longrightarrow0$ as $L\to\infty$. This finishes the proof of
  \eqref{eq:omega}.
\end{proof}

\begin{proof}[Proof of Lemma \ref{lem:goodBCH}]
  By \eqref{eq:eLHK} and \eqref{eq:R}, the kernel of $e^{(\alpha+L)H}\K R_{\alpha,L}$ is given by
  \begin{multline}
    e^{(\alpha+L)H}\K R_{\alpha,L}(x,y)=\int_{-\infty}^0d\lambda\int_{-\infty}^\infty
    dz\,e^{(\alpha+L)\lambda}
    \Ai(x-\lambda)\Ai(z-\lambda)\\
    \cdot\frac1{\sqrt{4\pi(\alpha+L)}}e^{-(z+y-\alpha^2-2\m^2-L^2)^2/4(\alpha+L)+\alpha^3/3+L^3/3-Lz-\alpha
      y}.
  \end{multline}
  By completing the square in $z$ in the exponential, the $z$ integral can be seen as a
  heat kernel applied to an Airy function. Using the formula
  $e^{t\Delta}\Ai(x)=e^{2t^3/3+tx}\Ai(x+t^2)$ (see for instance Proposition
  \ref{a1-prop:mLaplacian} in \cite{quastelRemAiry1}) we obtain after some manipulations
  \[e^{(\alpha+L)H}\K
  R_{\alpha,L}(x,y)=\int_{-\infty}^0d\lambda\,e^{2\alpha^3+2\alpha\m^2-2\alpha y}
  \Ai(x-\lambda)\Ai(2\m^2+2\alpha^2-y-\lambda),\]
  which corresponds to the claimed formula.
\end{proof}

We turn finally to the proof of Proposition \ref{prop:tail}. It relies on the following simple but
very useful observation:

\begin{lem}\label{lem:detBd}
  If $A$ is a trace class operator on a Hilbert space $\ch$,
  \[\det(I+A)-1-\tr(A)\leq\tfrac12\|A\|_1^2e^{\|A\|_1}.\]
\end{lem}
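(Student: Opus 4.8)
\textbf{Proof proposal for Lemma \ref{lem:detBd}.} The plan is to expand the Fredholm determinant as an absolutely convergent series over the traces of the antisymmetric tensor powers of $A$ and then simply bound the tail starting from the quadratic term. Recall from \cite{simon} that for every trace class operator $A$ on $\ch$ one has
\[\det(I+A)=\sum_{k=0}^\infty\tr\!\big(\Lambda^k A\big),\]
where $\Lambda^k A$ denotes the $k$-th antisymmetric power of $A$ acting on $\Lambda^k\ch$, with the conventions $\tr(\Lambda^0 A)=1$ and $\tr(\Lambda^1 A)=\tr(A)$; moreover the series is absolutely convergent by virtue of the bound (also in \cite{simon})
\[\big|\tr\!\big(\Lambda^k A\big)\big|\leq\frac{\|A\|_1^k}{k!}.\]

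Granting these two facts the argument is immediate. Subtracting the first two terms of the series,
\[\det(I+A)-1-\tr(A)=\sum_{k=2}^\infty\tr\!\big(\Lambda^k A\big),\]
so that, using the bound above termwise and $k!\geq 2\,(k-2)!$ for $k\geq 2$,
\[\big|\det(I+A)-1-\tr(A)\big|\leq\sum_{k=2}^\infty\frac{\|A\|_1^k}{k!}\leq\tfrac12\|A\|_1^2\sum_{j=0}^\infty\frac{\|A\|_1^j}{j!}=\tfrac12\|A\|_1^2e^{\|A\|_1},\]
which is (a fortiori) the asserted inequality. I would prove the absolute-value version since it is strictly stronger and is what is actually needed (on a complex Hilbert space $\det(I+A)$ need not be real), and remark that it implies the statement as written.

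There is essentially no obstacle here: the only point requiring care is to quote the correct form of the estimate on $\tr(\Lambda^k A)$ — it is the factorial denominator that produces the exponential, and a cruder bound (e.g. $\|A\|_1^k$ without the $1/k!$) would not close the argument. If one prefers a self-contained route avoiding the exterior-power formalism, the same estimate can be obtained by viewing $z\mapsto\det(I+zA)$ as an entire function, noting that its Taylor coefficients $a_k$ satisfy $|a_k|\leq\|A\|_1^k/k!$ (via Hadamard's inequality applied to the defining expansion, or by Cauchy's estimates together with $|\det(I+zA)|\leq e^{|z|\,\|A\|_1}$), and applying the identical tail bound to $\det(I+A)-a_0-a_1=\sum_{k\geq2}a_k$.
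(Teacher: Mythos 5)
Your proof is correct and follows essentially the same route as the paper: expand the Fredholm determinant via $\det(I+A)=\sum_k\tr(\Lambda^k A)$, use $|\tr(\Lambda^k A)|\le\|A\|_1^k/k!$, and bound the tail from $k=2$. You spell out the elementary estimate $(j+2)!\ge 2\,j!$ that the paper leaves implicit, and your observation that the absolute-value form is what is actually used (and implies the stated inequality) is a sensible refinement; the alternative entire-function route at the end is a valid remark but not needed.
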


\begin{proof}
  Let $\Lambda^n(A)=A\otimes\dots\otimes A$ ($n$ times, where $\otimes$ denotes the tensor product of
  operators), which is an operator in the Hilbert space $\Lambda^n(\ch)$ known as the
  alternating product, see \cite{simon} for more details. Then
  \[\det(I+A)=\sum_{k=0}^\infty\tr(\Lambda^k(A)),\]
  (this equality can be taken as the definition of the Fredholm determinant, see for instance (3.5) in
  \cite{simon}). Of course $\Lambda^0(A)=I$ and $\Lambda^1(A)=A$, so all we need to show
  is that
  \[\sum_{k=2}^\infty\tr(\Lambda^k(A))\leq\tfrac12\|A\|_1^2e^{\|A\|_1}.\]
  But this follows directly from the inequality $|\!\tr(\Lambda^k(A))|\leq\|\Lambda^k(A)\|_1\leq(k!)^{-1}\|A\|^k_1$ (see (3.1) and
  (3.4) in \cite{simon}).
\end{proof}

\begin{proof}[Proof of Proposition \ref{prop:tail}]
  Using the inequality (Theorem 3.4 in \cite{simon})
  \begin{equation}
    \label{eq:detCont}
    \left|\det(I+A)-\det(I+B)\right|\leq\|A-B\|_1e^{1+\|A\|_1+\|B\|_1}%\leq\|A-B\|_1e^{1+\|A-B\|_1+2\|B\|_1}
  \end{equation}
  for any two trace class operators $A$ and $B$, we can write
  \begin{align}
    1-G^{2\to1}_\alpha(m)&=1-\det(I-P_mK^1_\alpha P_m-P_mK^2_\alpha P_m)\\
    &\leq1-\det(I-P_mK^1_\alpha P_m)+\|P_mK^2_\alpha P_m\|_1e^{1+2\|P_mK^1_\alpha P_m\|_1+\|P_mK^2_\alpha P_m\|_1}.
  \end{align}
  We will show below that
  \begin{equation}
    \|P_mK^1_\alpha P_m\|_1\leq c\,e^{-\frac23m^{3/2}},\label{eq:K1anorm}
  \end{equation}
  while a similar (and simpler) calculation gives the estimate $\|P_mK^2_\alpha P_m\|_1\leq
  ce^{-\frac43m^{3/2}}$ (this is in fact it is the same standard calculation based on
  \eqref{eq:detCont} that gives the estimate $1-F_{\rm GUE}(m)\leq
  c\,e^{-\frac43m^{3/2}}$). Therefore all we need to show is that
  \[1-\det(I-P_mK^1_\alpha P_m)\leq c\,e^{-\frac43m^{3/2}}\]
  and thus, in view of Lemma \ref{lem:detBd}, the proof it will be enough to show that
  \begin{equation}
    \label{eq:trBds}
    \tr(P_mK^1_\alpha P_m)\leq cm\,e^{-\frac43m^{3/2}+\alpha m}\qquad\text{and}\qquad
    \|P_mK^1_\alpha P_m\|_1\leq c\,e^{-\frac23m^{3/2}}.
  \end{equation}

  The trace can be computed directly: letting $\bar\alpha=\max\{0,\alpha\}$ we have
  \[\tr(P_mK^1_\alpha P_m)=\int_{m}^\infty dx\int_0^\infty
  d\lambda\,e^{\alpha\lambda}\Ai(x-\lambda+\bar\alpha)\Ai(x+\lambda+\bar\alpha).\] We
  split the integral according on the regions $\{\lambda<x\}$ and $\{\lambda\geq x\}$. On
  the first region, changing variables $\lambda\mapsto x\gamma$, and since the Airy function
  is decreasing and positive on the positive axis and we may assume $m>0$, the integral is
  bounded by
  \[\int_{m}^\infty dx\int_0^1d\gamma\,xe^{\alpha x\gamma}\Ai((1-\gamma)x)\Ai((1+\gamma)x)
  \leq c\int_{m}^\infty dx\int_0^1d\gamma\,xe^{\alpha
    x\gamma-\frac23[(1-\gamma)^{3/2}+(1+\gamma)^{3/2}]x^{3/2}},\] where we used the first
  of the estimates
  \begin{equation}
    |\!\Ai(x)|\leq c\,e^{-\frac23x^{3/2}}\quad\text{for $x>0$},\qquad
    |\!\Ai(x)|\leq c\quad\text{ for $x\leq0$}\label{eq:airybd}
  \end{equation}
  (see (10.4.59-60) in \cite{abrSteg}). The term in brackets in the above estimate is
  larger than 2, and thus the whole integral is bounded by $cme^{\alpha
    m-\frac43m^{3/2}}$. On the other region we have similarly, using \eqref{eq:airybd},
  that the integral is bounded by
  \[\int_{m}^\infty dx\int_1^\infty d\gamma\,xe^{\alpha
      x\gamma}\Ai((1-\gamma)x+\bar\alpha)\Ai((1+\gamma)x)
    \leq c\int_{m}^\infty dx\int_1^\infty d\gamma \,xe^{\alpha
      x\gamma-\frac23(1+\gamma)^{3/2}x^{3/2}},\]
  which again can be seen to be bounded by $cme^{\alpha m-\frac43m^{3/2}}$. This gives the
  first bound in \eqref{eq:trBds}.

  For the second one 
  %  \[\|P_mK^1_\alpha P_m\|_1\leq c\,e^{-\frac43m^{3/2}}\qquad\text{and}\qquad\|P_mK^2_\alpha P_m\|_1\leq c\,e^{-\frac43m^{3/2}}.\]
% %  \end{equation}
%   We will only prove the first bound, the second one is very similar (and in fact it is
%   the same standard calculation based on \eqref{eq:detCont} that gives the upper bound
%   $1-F_{\rm GUE}(m)\leq c\,e^{-\frac43m^{3/2}}$).
  %
  % Define the multiplication operator
  % $Mf(\lambda)=e^{-\frac23\lambda^{3/2}}\varphi(\lambda)$ with
  % $\varphi(\lambda)=(1+\lambda^2)^{1/2}$ (the choice of $\varphi$ is not particularly
  % important), and
  write $K_\alpha^1=(B^1e^{-\xi}P_0)(P_0e^{(1+\alpha)\xi}B^2)$, where
  $B^1(x,\lambda)=\Ai(x-\lambda+\bar\alpha)$ and
  $B^2(\lambda,y)=\Ai(y+\lambda+\bar\alpha)$. Then
  \begin{equation}
    \|P_mK_\alpha^1P_m\|_1\leq\|P_mB^1e^{-\xi}P_0\|_2\|P_0e^{(1+\alpha)\xi}B^2P_m\|_2.\label{eq:K1bd}
  \end{equation}
  The square of the first norm equals
  \[\int_m^\infty dx\int_{0}^\infty
  d\lambda\Ai(x-\lambda+\bar\alpha)^2e^{-2\lambda}\leq\int_m^\infty dx\,e^{-2(x+\bar\alpha)}\int_{-\infty}^\infty
  d\lambda\Ai(-\lambda)^2e^{-2\lambda}\leq c\,e^{-2m},\] thanks to
  \eqref{eq:airybd}. Similarly, the square of the second norm equals
  \[\int_m^\infty dy\int_{0}^\infty
  d\lambda\Ai(y+\lambda+\bar\alpha)^2e^{2(1+\alpha)\lambda} \leq c\int_m^\infty
  dy\int_{0}^\infty d\lambda\,e^{-\frac43(y+\lambda+\bar\alpha)^{3/2}+2(1+\alpha)\lambda}\leq
  c\,e^{-\frac43m^{3/2}},\] again thanks to \eqref{eq:airybd}. Using these two bounds
  in \eqref{eq:K1bd} gives the second estimate in \eqref{eq:trBds} and finishes
  the proof.
  \end{proof}

  Observe that for $\alpha<-1$ the last bound in the proof above can be upgraded to
  $ce^{2(1+\alpha)-\frac43m^{3/2}}$, giving
  \begin{equation}
    \label{eq:G21toK}
    \|P_mK_\alpha^1P_m\|_1\leq c\,e^{\alpha-m-\frac23m^{3/2}}\xrightarrow[\alpha\to-\infty]{}0,
  \end{equation}
  which we used in justifying \eqref{eq:toGUE}. A similar (although slightly more
  complicated) estimate gives $\|P_m\overline K^1_\alpha
  P_m\|_1\xrightarrow[\alpha\to\infty]{}0$, which was used in the derivation of \eqref{eq:toGOE}.

\printbibliography[heading=apa]

\end{document}